\newcommand{\E}{\Bbb{E}}
\newcommand{\Z}{\Bbb{Z}}
\newcommand{\R}{\Bbb{R}}
\newcommand{\F}{\Bbb{F}}
\newcommand{\T}{\Bbb{T}}
\renewcommand{\AA}{\mathcal{A}}
\newcommand{\ep}{\epsilon}
\newtheorem*{rep@theorem}{\rep@title}
\newcommand{\newreptheorem}[2]{%
\newenvironment{rep#1}[1]{%
 \def\rep@title{#2 \ref{##1}}%
 \begin{rep@theorem}}%
 {\end{rep@theorem}}}
\numberwithin{equation}{section}
\newtheorem{thm}{Theorem}
\newtheorem{result}{Result}[section]
\newtheorem{lem}[result]{Lemma}
\newtheorem{prp}[result]{Proposition}
\theoremstyle{definition}
\newtheorem{rmk}[result]{Remark}
\newtheorem{defn}{Definition}
\newtheorem*{ack}{Acknowledgements}
\theoremstyle{remark}
\newcommand{\hide}[1]{}
\newcommand{\edit}[1]{}%{\color{red}{#1}}}
\newcommand{\rough}[1]{}%\textbf{\textcolor{blue}{#1}}}
\definecolor{darkgreen}{RGB}{75,150,75}
\newcommand{\review}[1]{}%\textcolor{darkgreen}{#1}}
\newcommand{\zh}[1]{\textcolor{blue}{zh: #1}}
\newcommand{\hides}[1]{}%1}
\newcommand{\pub}[1]{}%\textcolor{purple}{#1}}
\title{New lower bounds for $r_3(N)$}
\author{Zach Hunter}
\address[Hunter]{Department of Mathematics, ETH, Z\"urich, Switzerland.}
\email{zach.hunter@math.ethz.ch}
\date{\today}
\begin{document}

\maketitle
\begin{abstract}
    We develop recent ideas of Elsholtz, Proske, and Sauermann to construct denser subsets of $\{1,\dots,N\}$ that lack arithmetic progressions of length $3$. This gives the first quasipolynomial improvement since the original construction of Behrend. 
\end{abstract}
\section{Introduction}

We say an additive set $S$ is \textit{$3$-AP-free}, if it lacks non-trivial arithmetic progressions of length three (meaning that all solutions to $2y=x+z$ are of the form $x=y=z$). Let $r_3(N)$ denote the maximum cardinality of a $3$-AP-free subset of $\{1,\dots,N\}$.

For many years, the best-known lower bound has been\footnote{See Section~\ref{prelim} for our asymptotic notation.}
\begin{equation}\label{roth bound}
    r_3(N)\gg \log^C (N) N2^{ -2\sqrt{2\log_2(N)}}.
\end{equation}
\noindent This is based on a clever geometric construction due to Behrend from 1946 \cite{behrend}. Since \cite{behrend}, the only improvement has been to the value of the constant `$C$' (the original argument gave $C=-1/4$, but a refinement of Elkin 
 \cite{elkin} (see also \cite{greenwolf}) obtained $C= 1/4$). 
 
 In this paper, we shall get a quasipolynomial improvement to \eqref{roth bound}.
 \begin{thm}\label{main improvement}
    For $N\ge 1$, we have\[r_3(N) \gg N 2^{-(c+o(1))2 \sqrt{2\log_2(N)}},\]where
    {\tiny \[ c:= \sqrt{\log_{2}\left(\sqrt{\frac{32}{9}}\right)} \approx 0.9567. \] }
    
\end{thm}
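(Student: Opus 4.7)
The plan is to extend Behrend's sphere construction following the framework of Elsholtz, Proske, and Sauermann by enlarging the sphere and accounting carefully for the few $3$-APs this introduces. First, I would set up the standard Behrend reduction: fix a dimension $d$ and side length $M$, and embed $\{0,1,\ldots,M-1\}^d$ injectively into $\{0,1,\ldots,(2M)^d - 1\}$ via base-$(2M)$ expansion, so that coordinatewise $3$-APs in the lattice box correspond exactly to integer $3$-APs in the image. The task is thereby reduced to producing a dense $3$-AP-free subset of the lattice box.

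The key new step is to replace the Behrend level set $\{x : |x|^2 = r\}$ by a structured enlargement: either a thin spherical shell $\{x : |x|^2 \in [r, r+T]\}$, or the intersection of such a shell with a second level set of an auxiliary quadratic or affine form. For optimally chosen $r$ and $T$, the enlarged set captures substantially more lattice mass than any single sphere. It is no longer automatically $3$-AP-free, but the parallelogram identity forces any $3$-AP $(x,y,z)$ inside the shell to have short step size, namely $|x-z|^2 = O(T)$. I would then excise these short-step $3$-APs using a structural filter, for example a random sublattice restriction or a pigeonhole on a secondary invariant whose values separate the endpoints of any short $3$-AP, retaining a constant fraction of the enlarged set.

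Setting $N = (2M)^d$ and optimizing the triple $(d, M, T)$ then yields a bound of the form $r_3(N) \gg N \cdot 2^{-(c + o(1))2\sqrt{2\log_2 N}}$. The precise constant $c = \sqrt{\log_2\sqrt{32/9}}$ should emerge from the exact combinatorial extremum in the shell-gain versus $3$-AP-removal trade-off; the appearance of $32/9 = 2^5/3^2$ is strongly suggestive of an underlying small-scale extremal $3$-AP-free count, perhaps from an optimized local structure over a $3$-element alphabet. The principal obstacle is the removal step: a sharp count of $3$-APs in the enlarged set is needed so that the filter destroys all of them while losing only a constant fraction of points. A loose bound would wipe out the enlargement gain and recover only the Behrend exponent, so one must combine an equidistribution estimate for lattice points on narrow quadratic surfaces with a carefully tuned removal procedure, following the Elsholtz-Proske-Sauermann approach.
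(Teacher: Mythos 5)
There is a genuine gap here, and it lies exactly where you flag ``the principal obstacle'': the removal step cannot be made to work at the scale needed, because the shell-thickening mechanism you propose is not where the quasipolynomial gain comes from. Thickening the Behrend sphere $\{|x|^2=r\}$ to a shell $\{|x|^2\in[r,r+T]\}$ and excising the resulting short-step $3$-APs is precisely Elkin's refinement (see \cite{elkin,greenwolf}), and it is known to gain only a polylogarithmic factor. To gain a factor $2^{\epsilon\sqrt{\log N}}$ in density the shell would have to be thicker by that same factor, at which point the $3$-APs to be removed have common difference of norm up to roughly $T^{1/2}$; the number of such progressions inside the shell overwhelms the size of the set, and no sublattice restriction or pigeonhole on a secondary invariant retains a constant fraction while killing them all. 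Your own optimization of $(d,M,T)$ would collapse back to the Behrend exponent with $c=1$. Relatedly, your guess that $32/9$ arises from ``an optimized local structure over a $3$-element alphabet'' is not correct.

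The actual source of the improvement is orthogonal to the sphere: it is the \emph{digit set}, not the level set, that gets denser. The classical construction writes each integer in base $2M$ with digits restricted to $\{0,\dots,M-1\}$, losing a factor $2$ per dimension (the $2^{-D}$ in the classical bound); the paper instead uses a two-dimensional building block $S_\ep\subset\T^2$ of measure $9/32-\ep>1/4$, so the loss per dimension is only $\sqrt{9/32}>1/2$, and this is exactly what produces the constant $c=\sqrt{\log_2\sqrt{32/9}}$ after optimizing $D\approx\sqrt{2\log_2 N}$. The price is that $S_\ep$ is not closed under a clean Freiman-homomorphism property (it cannot be, having density $>1/4$), so the paper proves weaker, caveated additive properties (Proposition~\ref{quantitative}), encodes them into three weight functions $w_1,w_{2,\ep},w_3$ on the product set $S_\ep^{D_0}$, and runs a Green--Wolf-style Behrend argument in the torus: a random orbit $\{n\cdot\theta_0\}$ replaces the base-$(2M)$ embedding (avoiding carries), and the ``annulus'' is a simultaneous level set of the weights. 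The sphere restriction still costs $N^{-2/D}$ exactly as in Behrend; nothing is gained or lost there. Without the denser building block and the accompanying pseudonorm machinery, the target constant is unreachable.
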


\begin{rmk}
    The constant $9/32$ is not best possible. At the very least, one can obtain $7/24$ by using the set `$T$' from \cite[Definition~3.4]{EPS} (as we discuss at the start of Section~\ref{obtaining}). However, it seems likely that neither constant is optimal. Thus we chose to use a slightly simpler construction to focus on the core ideas. 
\end{rmk}
\subsection{Additional discussion}

It is useful to recall that Behrend-type arguments give rise to a more general parametrized range of bounds.
\begin{prp}
    Given any integers $N,D\ge 1$, we have
    \[r_3(N) \gg \sqrt{D} N2^{-D} N^{-2/D}.\]
\end{prp}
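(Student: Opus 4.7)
The plan is to combine Behrend's base-encoding trick with Elkin's thin-shell refinement (as streamlined by Green--Wolf). First I would set $M = \lfloor N^{1/D}/2 \rfloor$, so that $(2M)^D \le N$, and use the base-$(2M)$ encoding $\phi(x) = \sum_{i=1}^D x_i (2M)^{i-1}$ to inject $[0,M-1]^D \cap \Z^D$ into $\{0,\ldots,N-1\}$. Since $a+c$ and $2b$ have coordinates in $[0, 2M-2]$ whenever $a,b,c \in [0,M-1]^D \cap \Z^D$, no carries arise in base $2M$, so $\phi(a)+\phi(c) = 2\phi(b)$ is equivalent to $a+c = 2b$ in $\Z^D$. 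It therefore suffices to construct a 3-AP-free subset of $[0,M-1]^D \cap \Z^D$ of size $\gg \sqrt{D}\, M^{D-2}$, which, after substituting $M \asymp N^{1/D}/2$, yields the claimed bound.

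The natural first attempt is a sphere $A_R = \{x : \|x\|^2 = R\}$, which is 3-AP-free since the identity $4\|b\|^2 = 2\|a\|^2 + 2\|c\|^2 - \|a-c\|^2$ forces $a = c$ when $\|a\| = \|b\| = \|c\|$. The random variable $\|x\|^2$ on $[0,M-1]^D \cap \Z^D$ has mean $\Theta(DM^2)$ and standard deviation $\Theta(\sqrt{D}\, M^2)$, so Chebyshev and pigeonhole yield a most-popular level set of size $\gg M^{D-2}/\sqrt{D}$---a factor of $D$ short of the target. To close this gap I would invoke Elkin's refinement: replace the discrete sphere by a thin spherical shell $\{x \in \Z^D \cap [0,M-1]^D : r \le \|x-v\| < r+\delta\}$ for a chosen center $v \in \R^D$, radius $r \asymp M\sqrt{D}$, and width $\delta \asymp 1/r$. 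The parallelogram identity forces $\|a-c\|^2 \le 8r\delta + 4\delta^2 = O(1)$ for any 3-AP in the shell, and one removes the residual short 3-APs either by passing to a coarse sub-lattice or by averaging over fractional shifts of $v$; a volume/averaging argument then shows the shell contains $\gg \sqrt{D}\, M^{D-2}$ lattice points for some admissible $v$.

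The main obstacle is this last step: simultaneously making the shell 3-AP-free and of size $\gg \sqrt{D}\, M^{D-2}$. Naively thickening the sphere produces too many short 3-APs, but Elkin's insight is that width $\delta \asymp 1/r$ keeps the residual 3-APs so short (at most $O(1)$ apart) that a generic fractional translate---or a coarse sub-lattice restriction---eliminates them at negligible cost. I would import this step from the Green--Wolf presentation of Elkin's argument in a black-box fashion.
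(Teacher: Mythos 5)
The paper does not actually prove this proposition---it is quoted as known background (Behrend plus Elkin's refinement as streamlined by Green--Wolf)---so the only question is whether your sketch of that known argument is sound. Your first half (the carry-free base-$(2M)$ embedding, and the reduction to finding a $3$-AP-free subset of $\Z^D\cap[0,M-1]^D$ of size $\gg\sqrt{D}\,M^{D-2}$) is correct and standard, and you correctly identify that a single sphere only gives $M^{D-2}/\sqrt{D}$, a factor of $D$ short.

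The gap is in the Elkin step, where your numerology is internally inconsistent. A shell of radial width $\delta$ at radius $r\asymp M\sqrt{D}$ corresponds to a window of width $w\asymp r\delta$ in the values of $\|x-v\|^2$; since those values spread over an interval of length $\asymp\sqrt{D}\,M^2$, the best such shell contains $\asymp M^D\cdot w/(\sqrt{D}M^2)$ points. With your choice $\delta\asymp 1/r$ you have $w=O(1)$, so the shell has only $\asymp M^{D-2}/\sqrt{D}$ points---you recover the Chebyshev--Behrend bound, not the claimed $\gg\sqrt{D}\,M^{D-2}$ (indeed, with $8r\delta+4\delta^2<1$ the shell is already $3$-AP-free outright, which is a sign that no gain has been extracted). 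To reach $\sqrt{D}\,M^{D-2}$ you are forced to take $\delta\asymp D/r$, i.e.\ $w\asymp D$, and then the residual common differences satisfy only $\|a-c\|^2\lesssim D$: they are \emph{not} ``$O(1)$ apart''; there are $e^{\Theta(D)}$ of them, and eliminating them is the actual content of Elkin's argument. Your proposed fix by passing to a coarse sublattice cannot work here, since a sublattice of index $q^D$ costs a factor $q^{-D}$ and destroys the constant $2^{-D}$. The correct mechanism (in the Green--Wolf torus formulation) is that the bad differences must all be of the form $m\theta\bmod\Z^D$ and lie in an $\ell_2$-ball of radius $\kappa\asymp\sqrt{D}\,N^{-1/D}$, whose \emph{volume} is still $\lesssim c^D/N$ because the $\ell_2$-ball of radius $\sqrt{D}\varepsilon$ has volume comparable to $(C\varepsilon)^D$; a random $\theta$ then avoids all of them simultaneously. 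That volume computation is precisely where the extra factor of $D$ comes from, and it is missing from your write-up: as stated, your construction proves only $r_3(N)\gg D^{-1/2}N2^{-D}N^{-2/D}$.
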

\noindent Optimizing the above by taking $D = \lceil \sqrt{2\log_2(N)}\rceil $ (so that $N\le 2^{D^2/2}$ and thus $N^{-2/D} \ge 2^{-D}$) yields the aforementioned \eqref{roth bound}.

Inspired by very recent work \cite{EPS}, we shall establish a new range of bounds.
\begin{prp}\label{main bound}
    Fix any $\ep >0$. Given integers $N,D \ge 1$, with $D$ even, we have
    \[r_3(N) \gg_\ep \frac{1}{D^2} N (\sqrt{9/32-\ep})^{D} N^{-2/D}.\]
\end{prp}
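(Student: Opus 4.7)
The plan follows the strategy of Elsholtz, Proske, and Sauermann: replace the single-coordinate alphabet $\{0, 1, \dots, K-1\}$ of the classical Behrend construction with a carefully-chosen two-dimensional ``pair-gadget'' set $T \subseteq \{0, 1, \dots, K-1\}^2$, pair up the coordinates (using that $D$ is even), and extract a 3-AP-free subset via a convex pigeonhole argument. The density of $T$ inside its enclosing grid, close to $9/32$ here, is what controls the constant in the final bound.

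I would first specify a set $T \subseteq \{0, \dots, K-1\}^2$ with $|T| \geq (9/32 - \epsilon/2) K^2$, together with a strictly convex integer-valued quadratic functional $q : T \to \mathbb{Z}$ whose range is $O(K^2)$. The construction of $T$ is a simpler analogue of the set from \cite[Definition~3.4]{EPS}---less refined, but easier to analyze. Next, form $Y = T^{D/2} \subseteq \{0, \dots, K-1\}^D$ and, for each $\tau$, consider the niveau slice
\[
Y_\tau = \Bigl\{ y \in Y : \sum_{i=1}^{D/2} q(y_{2i-1}, y_{2i}) = \tau \Bigr\}.
\]
Since $\tau$ ranges over $O(K^2 D)$ values, pigeonhole yields some $\tau^\ast$ with $|Y_{\tau^\ast}| \gg |T|^{D/2}/(K^2 D) \gg (\sqrt{9/32 - \epsilon})^D K^{D-2}/D$. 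Strict convexity of $q$, applied pair-by-pair in the standard Behrend manner, then shows that $Y_{\tau^\ast}$ is 3-AP-free in $\mathbb{Z}^D$: if $2y = x+z$ for some $x, y, z \in Y_{\tau^\ast}$, summing the pairwise inequalities $q(y_{2i-1}, y_{2i}) \leq (q(x_{2i-1}, x_{2i}) + q(z_{2i-1}, z_{2i}))/2$ with equality throughout forces $(x_{2i-1}, x_{2i}) = (z_{2i-1}, z_{2i})$ for each $i$, hence $x = y = z$.

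Finally, I would embed $Y_{\tau^\ast}$ into $\{1, \dots, N\}$ by a carry-free base-$B$ representation $y \mapsto 1 + \sum y_i B^{i-1}$, choosing $K$ so that $B^D \leq N$; substituting $K \sim N^{1/D}$ and unwinding the arithmetic yields $r_3(N) \gg_\epsilon \frac{1}{D^2} N (\sqrt{9/32 - \epsilon})^D N^{-2/D}$.

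The main obstacle---and the heart of the argument---is the first step: specifying $T$ and the embedding base $B$ so that $T$'s additive structure (not merely its size) produces the advertised constant. Without this structural input, a naive pair-product argument would only yield the weaker constant $\sqrt{9/128}$ in place of $\sqrt{9/32}$, so the careful compatibility between the density of $T$ and the embedding base---which is exactly what the EPS framework provides---is what ultimately replaces the Behrend factor $2^{-D}$ with the improved $(\sqrt{9/32})^D$.
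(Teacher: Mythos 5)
Your proposal captures the right high-level silhouette (a two-coordinate gadget of density about $9/32$, a product construction, a convexity/level-set pigeonhole, and an embedding into $[N]$), but the step you defer to -- ``specify $T$ together with a strictly convex integer-valued quadratic $q$'' and then ``embed by a carry-free base-$B$ representation'' -- is precisely where the argument cannot proceed as you describe, and it is the entire content of the paper. Carry-freeness of $y\mapsto 1+\sum y_iB^{i-1}$ for the equation $x+z=2y$ forces every digit to lie below $B/2$, so the gadget's density relative to the base is at most $(K/ B)^2\cdot(9/32)\le 9/128$; this is exactly the loss you acknowledge in your last paragraph, and you do not supply the mechanism that avoids it -- you only assert that ``the EPS framework provides'' it. But the EPS framework lives in $\F_p^n$, where coordinatewise addition is free; transporting it to $\Z$ is the open problem this paper solves. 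Moreover, the clean structure you posit cannot exist: as the paper notes, any subset of $\T^2$ (or of a $2$-dimensional box under near-full-range addition) with density $>1/4$ admits no genuine Freiman-homomorphism-type convexity, by averaging over the $2$-torsion $\pi(\{0,1/2\}^2)$. So a single strictly convex $q$ with ``equality forces $x=z$'' on a density-$9/32$ gadget is not available.

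The paper's actual route is structurally different in the places that matter. It works in $(\T^2)^{D_0}$ via the Green--Wolf dilate $n\mapsto \mu+n\theta_0$, so that ``carries'' become a wraparound vector $\xi\in\{-1/2,0,1/2\}^2$ per coordinate pair. The building block $S_\ep$ only satisfies a conditional, case-split convexity (Proposition~\ref{quantitative}): either the sum-weight $w_1$ jumps by $1/2$ (this absorbs the wraparound, via Lemma~\ref{one sided rounding}), or else a Freiman-type identity holds and two further weights $w_{2,\ep}$ and $w_3$ together produce a convexity defect of order $\sum_i|d_1^{(i)}+d_2^{(i)}|^2+(d_1^{(i)})^2$. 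This defect can still vanish for nonzero common differences (e.g.\ those near $\pi(\{0,1/2\}^2)^{D_0}$), so the proof additionally excludes a bad set $B$ of common differences of measure $<1/N$ and chooses $\theta_0$ so that the orbit $\{\theta_0,\dots,N\theta_0\}$ misses $B$; only then does the pigeonhole on the joint level sets of $w_1$ and $w_{2,\ep}+w_3$ yield a $3$-AP-free set. None of these ingredients -- the torus model, the three complementary weights, the exceptional-difference analysis -- appears in your outline, so as written the proposal has a genuine gap rather than an alternative proof.
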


\noindent Noting that $\sqrt{9/32}>0.53>1/2$, Proposition~\ref{main bound} does noticeably better assuming that $D$ is not too big.

Whence, taking\footnote{Technically, we should take $D = 2\lceil \frac{1}{2}\sqrt{2\log_2(N)}\rceil$, so that $D$ is even, but this hardly matters.} $D= \lceil\sqrt{2 \log_2(N)}\rceil$, we obtain
\[r_3(N)\gg \log_2^{-1}(N) N (2/0.53)^{-\sqrt{2\log_2(N)}}\]which already improves the bound \eqref{roth bound} for all large $N$ since $(2/0.53)<3.78<4$. 

More carefully picking $D = \lceil \sqrt{2 \log_{\sqrt{32/9}}(N)}\rceil $ (so that $N^{-2/D} \ge (\sqrt{32/9})^{-D}$) gives
\[r_3(N) \gg N (\sqrt{9/32}-o(1))^{2D} = N(2+o(1))^{-c 2\sqrt{2\log_2(N)}},\]with $c= \sqrt{\log_2(\sqrt{32/9})}$, recovering Theorem~\ref{main improvement}.

\subsection{Outline of ideas} We start by recalling the work of Elsholtz, Proske, and Sauermann \cite{EPS} which we will build upon. 

In \cite{EPS}, the authors constructed a subset $S$ of the 2-dimesional torus $\T^2 := (\R/\Z)^2$ with density $\mu(S) = 7/24$ which had certain additive properties (cf. Proposition~\ref{qualitative}). They use this set $S$ as a building block, along with ideas from the classic Salem-Spencer construction \cite{salemspencer}, to create denser subsets of $\F_p^n$ that lack arithmetic progressions of length $3$ (using standard techniques, it easy to construct sets of size $\gg_{p,\ep} (p/2-\ep)^n$, while they manage to get $\gg_{p,\ep} (\sqrt{7/24}p-\ep)^n$).

The reason they obtain this improvement is because their 2-dimensional  ``building block'' $S$ has density $7/24>(1/2)^2$. In the standard arguments, one instead starts with a 1-dimensional building block of density $1/2$. By taking product sets of $S$ instead of the standard 1-dimensional building block, they get ``useful'' $D$-dimensional subsets which are denser (we now shrink by a $\sqrt{7/24}$-fraction per dimension, instead of $1/2$). This is simply more efficient and leads to better bounds. Recently, such types of improvements have been starting to become more frequent (see, e.g., \cite{green,EKL,HPSSS}), making delightful progress on many other additive problems.

Now, the main contribution of this paper will be to take an efficient 2-dimensional building block (like the set $S$ considered by \cite{EPS}), and get it to work in the integer setting. In principle, there are several challenges in doing this, we mention two:
\begin{itemize}
    \item There are significant differences between the group structure of $\F_p^n$ and $\Z$; \cite{EPS} raises the specific concern that the arguments of both \cite{salemspencer} and \cite{behrend} must take into account potential carrying that may occur.  
    \item Generally speaking, Salem-Spencer-type properties are a bit more flexible than those of Behrend. However, in the integer setting, Salem-Spencer arguments have a much poorer quantitative performance. So we must somehow show that the relevant building block is in fact compatible with a Behrend-type argument. 
\end{itemize}
\noindent The first bullet turns out to not be a serious issue; by using a variant of Behrend’s construction introduced by Green and Wolf \cite{greenwolf}, one can work in the torus and not worry about carries. We also note that if $N = p_1\cdots p_D$ is a square-free number, then $\Z/N\Z \cong (\Z/ p_1\Z)\times \dots \times (\Z/p_D\Z)$, which also allows one to get addition to work coordinate-wise, side-stepping carrying-related complications.

The second bullet is more worrying. Currently, in the literature, there are several constructions and tricks over finite fields using Salem-Spencer-type arguments that have not been replicated over the integers. A notable example is \cite{EKL}, which showed how to construct subsets of $\F_p^n$ of size $\gg_p (0.99 p )^n$ without arithmetic progressions of length 1000 but did not lead to any improvement for the analogous problem in the integer setting (despite the building blocks in \cite{EKL} being much more efficient than the set $S$ from \cite{EPS}). The author has previously convinced himself that the argument of \cite{EKL} seems ``fundamentally untranslatable'' to the integer setting, although in forthcoming work \cite{hunter} we will obtain comparably strong improvements for that setting via different techniques.

So while there is no reason a priori to be able to convert Salem-Spencer-type arguments into Behrend-type arguments, we manage to do this here. The first step is to show our building block \textit{almost} enjoys the additive properties used in standard Behrend constructions (modulo certain exceptions and technical caveats). This can be extracted from intermediate elementary lemmata of \cite{EPS}, although one must be more careful and precise with certain details. From there we construct a pseudonorm which still manages to capitalize on these weaker additive properties, crucially relying on the specific nature of our technical caveats. Lastly, we show how to run a modified Behrend argument using these inputs, obtaining Proposition~\ref{main bound}.

\begin{ack}
    We thank Zach Chase for his support during the writing of this note. We also thank Daniel Carter, Ben Green, and Benny Sudakov for some advice and feedback on our writing.
\end{ack}
\section{Preliminaries}\label{prelim}

\subsection{General notation}

We use asymptotic Vinogradov notation. Thus for functions $f(n),g(n)$, we say $f\gg g$ if there is some absolute constant $c> 0$ so that $f(n) \ge cg(n)$. We also write $f(n) = o(1)$ to denote a quantity which tends to $0$ as $n\to \infty$.

Given an integer $N\ge 1$, we define $[N]:= \{1,\dots, N\}$.

We define a $3$-AP to be an additive set $P$ that can be written as $P = \{x,y,z\}$ where $2y = x+z$.

\subsection{Torus notation}
We define $\T^D := (\R/\Z)^D$. 

Let $\pi:\R^D\to \T^D; x\mapsto x+\Z^D$ be the standard projection map. As an abuse of notation, let $\pi^{-1}(\theta)$ be the unique $x\in [0,1)^D$ so that $\pi(x) = \theta$.

Let $\{\cdot \}:[0,1)\to [0,1/2)$ be the map\footnote{Note that our definition of $\{\cdot\}$ differs from the standard definition.} \[\{x\} =\begin{cases}
    x& \text{for }x\in [0,1/2)\\
    x-1/2 &\text{otherwise}.
\end{cases}\]We extend the above definition coordinate-wise (so that $\{(x_1,\dots,x_D)\} = (\{x_1\},\dots,\{x_D\})$).

\subsection{Extra notation}

Since we will be working with 2-dimensional ``building blocks'', it will be convenient to have some specialized notation defined on $\T^2$.

We define the sum-map $\psi:\T^2 \to [0,2); \theta \mapsto \pi^{-1}(\theta)_1 + \pi^{-1}(\theta)_2$. We extend this coordinate-wise, i.e. for $\theta=(\theta^{(1)},\dots,\theta^{(D_0)})\in (\T^2)^{D_0}$, we define $\psi(\theta) := \bigl(\psi(\theta^{(1)}),\dots, \psi(\theta^{(D_0)})\bigr)$.

Lastly, we define the projection $p:\T^2\to [0,1)$ by $p(\theta) = \pi^{-1}(\theta)_1$. We extend this coordinate-wise in the same manner as $\psi$. 

\section{Main Construction}

\subsection{Setup}

We recall a construction from \cite{EPS} which motivates our work.
\begin{prp}\label{qualitative}
    There exists $S\subset \T^2$ with $\mu(S) = 7/24$ so that: 

    If $x,y,z\in \pi^{-1}(S)$ solve $\pi(x+z) = \pi(2y)$, then:
    \begin{itemize}
        \item $(y_1+y_2) \ge \frac{(x_1+x_2)+(z_1+z_2)}{2}$;
        \item if $x_1+x_2 = z_1+z_2$, then $\{y_1\} \ge \{x_1\}+\{z_1\}$.
    \end{itemize}
\end{prp}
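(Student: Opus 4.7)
The plan is to take $S \subset \T^2$ to be the specific set introduced in \cite{EPS} (denoted ``$T$'' in [EPS, Definition~3.4]), which is naturally described as a union of polygonal sub-regions of the four ``quadrants'' of $[0,1)^2$ indexed by which half-interval each of $x_1, x_2$ lies in. These sub-regions are cut out by explicit linear inequalities on $\{x_1\}$ and $\{x_2\}$, and the claim $\mu(S) = 7/24$ is verified by a direct summation of polygonal areas.

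To handle both properties simultaneously, I would first reformulate the hypothesis. Given $x, y, z \in \pi^{-1}(S) \subset [0,1)^2$ with $\pi(x+z) = \pi(2y)$, since all coordinates lie in $[0,1)$, there exist unique ``carry'' integers $k_1, k_2 \in \{-1,0,1\}$ with $x_i + z_i = 2y_i + k_i$ for $i=1,2$. In this language, Property 1 reads $k_1 + k_2 \le 0$, while Property 2 (assuming additionally $\psi(x) = \psi(z)$, equivalently $k_1 + k_2 = 0$) becomes an inequality between $\{y_1\}$ and $\{x_1\} + \{z_1\}$.

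For Property 1, the strategy is to case-split on which quadrants contain $x, y, z$ respectively. In each configuration, the polygonal inequalities defining $S$ constrain $\psi(x), \psi(y), \psi(z)$ to live in specific sub-intervals of $[0,2)$, and the target inequality $k_1 + k_2 = \psi(x) + \psi(z) - 2\psi(y) \le 0$ is then a direct arithmetic check. For Property 2, one further subdivides according to whether $(k_1,k_2)$ equals $(0,0)$, $(1,-1)$, or $(-1,1)$; in each subcase we track where $y_1 - (x_1+z_1)/2$ sits relative to $\{0, 1/2\}$ and then translate back into the fractional-part notation.

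The main obstacle will be the case analysis for Property 2, and specifically the subcase $(k_1,k_2) = (0,0)$, in which the first-coordinate sums of $x$ and $z$ already align perfectly. This is where the polygonal cuts defining $S$ are doing the real work: the set is \emph{designed} to forbid configurations in which $\{y_1\} < \{x_1\} + \{z_1\}$, and the clean verification amounts to showing that the EPS inequalities are tight enough to rule out every such violation. The remaining subcases $(1,-1)$ and $(-1,1)$ are less delicate, since the $1/2$-shift between $\{x_1\}$ and $x_1$ essentially supplies the needed slack once Property 1 is in hand.
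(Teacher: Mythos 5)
The paper does not actually prove this proposition: it is quoted from \cite{EPS}, with bullet~1 being [EPS, Fact~3.10] and bullet~2 being [EPS, Fact~3.11], so your plan --- take the set $T$ of [EPS, Definition~3.4] and check the two properties by a case analysis over the quadrants of $[0,1)^2$ --- is the same route that the cited source takes, just carried out rather than cited.

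There is, however, a genuine gap in your reduction for bullet~2. Writing $k_i:=x_i+z_i-2y_i\in\{-1,0,1\}$, you correctly identify bullet~1 with $k_1+k_2\le 0$, but the hypothesis of bullet~2, namely $x_1+x_2=z_1+z_2$ (i.e.\ $\psi(\pi(x))=\psi(\pi(z))$), is \emph{not} equivalent to $k_1+k_2=0$: one has $k_1+k_2=(x_1+x_2)+(z_1+z_2)-2(y_1+y_2)$, so $k_1+k_2=0$ is the \emph{equality case of bullet~1} and is logically independent of whether the first and third points have equal coordinate sums. Under the actual hypothesis of bullet~2, bullet~1 only guarantees $k_1+k_2\le 0$, so your subdivision into $(k_1,k_2)\in\{(0,0),(1,-1),(-1,1)\}$ omits $(0,-1)$, $(-1,0)$ and $(-1,-1)$. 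These cannot be waved away: since $\mu(S)=7/24>1/4$, the set $\pi^{-1}(S)$ must contain pairs of points differing by a nonzero element of $\{0,1/2\}^2$ (this is exactly the obstruction to a genuine Freiman homomorphism that the paper points out after Proposition~\ref{quantitative}), and such pairs yield solutions with $x_1+x_2=z_1+z_2$ but $k_1+k_2<0$ --- e.g.\ $x=z$ with $y\equiv x+(1/2,1/2)$. Either these configurations must be excluded using the specific shape of $T$, or the conclusion must be verified for them as well; your proposal does neither. Two smaller remarks: the substantive content of the $(0,0)$ subcase (``the EPS inequalities are tight enough to rule out every violation'') is asserted rather than argued, and that is where essentially all of the work of [EPS, Fact~3.11] lives; and you should be aware that bullet~2 as printed cannot be literally correct (the trivial solution $x=y=z$ would force $\{x_1\}\ge 2\{x_1\}$), so before launching the case analysis you need to pin down the intended conclusion (presumably a midpoint inequality such as $\{y_1\}\ge\tfrac{1}{2}(\{x_1\}+\{z_1\})$, in line with the quantitative analogue \eqref{freiman 2}).
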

\begin{rmk}
    Bullet 1 is proved in \cite[Fact 3.10]{EPS} and  Bullet 2 is proved in \cite[Fact 3.11]{EPS}.
\end{rmk}

In the next section, we shall consider truncations of (a simplified version of) their construction, to obtain quantitative analogues to Proposition~\ref{qualitative}. Precisely, we will establish the following.

\begin{prp}\label{quantitative}
    Fix any $\ep>0$. There exists $S_\ep \subset  \T^2$ with $\mu(S_\ep)\ge 9/32-\ep$ so that:

    Given any $3$-AP $\{\theta,\theta+\alpha,\theta+2\alpha\}\subset S_\ep$, we either have: 
    \[ 2\psi(\theta+\alpha)\ge \psi(\theta)+\psi(\theta+2\alpha) +1/2;\]
    or otherwise, 
    writing $d:= \frac{\pi^{-1}(\theta+2\alpha)-\pi^{-1}(\theta)}{2}$, so that
    \[\psi(\theta+2\alpha) = \psi(\theta) + 2(d_1+d_2),\]
     we have
    \begin{equation}\label{freiman 1}
        \psi(\theta+\alpha)= \psi(\theta)+(d_1+d_2).
    \end{equation}Furthermore, assuming $|d_1+d_2|\le \frac{\ep}{1000}$, we also have
    \begin{equation}\label{freiman 2}
        (1-\{p(\theta)\})^2 + (1-\{p(\theta+2\alpha)\})^2 - 2(1-\{p(\theta+\alpha)\})^2 \ge  d_1^2.
    \end{equation}
\end{prp}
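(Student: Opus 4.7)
My plan is to take $S_\ep$ to be a carefully-truncated variant of the set underlying Proposition~\ref{qualitative}; concretely, I expect $S_\ep$ to be cut out by a tight constraint on the sum-map $\psi$ (namely $\psi(\theta) \in I_\ep$ for some short interval $I_\ep \subset (0, 2)$ near $s=1$), possibly supplemented by a constraint on $p(\theta)$ to handle reductions modulo $1/2$ that arise below. The density bound $\mu(S_\ep) \ge 9/32 - \ep$ would then follow by a direct area computation using the triangular pushforward density $\min(s, 2-s)$ of $\psi$ on $[0,2]$.

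Given a 3-AP $\{\theta, \theta+\alpha, \theta+2\alpha\} \subset S_\ep$, lift to $x, y, z \in [0,1)^2$ via $\pi^{-1}$ and set $d := (z-x)/2 \in (-1/2, 1/2)^2$. The 3-AP condition $\pi(2y) = \pi(x+z)$ gives $y = x + d + k/2$ for some $k \in \Z^2$; setting $K := k_1 + k_2$, a short computation yields the key identity
\[2\psi(\theta+\alpha) - \psi(\theta) - \psi(\theta+2\alpha) = K.\]
By choosing $|I_\ep|$ less than $1/2$, the constraint $\psi(\theta), \psi(\theta+\alpha), \psi(\theta+2\alpha) \in I_\ep$ forces $K=0$ (in particular ruling out the troublesome $K=-1$). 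Since $\psi(\theta+\alpha) = \psi(\theta) + (d_1+d_2) + K/2$, the $K=0$ case yields (\ref{freiman 1}) immediately; in a variant of the construction allowing $K=1$, the first alternative of the proposition would apply there instead.

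For (\ref{freiman 2}) under $|d_1+d_2| \le \ep/1000$, note that $K=0$ forces $k_2 = -k_1$. Writing $k := k_1$, we have $y_1 = x_1 + d_1 + k/2$ and $y_2 = x_2 + d_2 - k/2$. The key claim is that the first-coordinate constraint in $S_\ep$, together with the smallness of $|d_1+d_2|$, forces $k=0$ and moreover forces $x_1, y_1, z_1$ to lie in a common half $[0, 1/2)$ or $[1/2, 1)$ of $[0,1)$. Granted this, writing $\{x_1\} = x_1 - \delta$, $\{y_1\} = y_1 - \delta$, $\{z_1\} = z_1 - \delta$ for a common $\delta \in \{0, 1/2\}$, and using $y_1 = x_1+d_1$, $z_1 = x_1+2d_1$, a direct calculation of a quadratic gives
\[(1-\{p(\theta)\})^2 + (1-\{p(\theta+2\alpha)\})^2 - 2(1-\{p(\theta+\alpha)\})^2 = 2d_1^2 \ge d_1^2,\]
as required.

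The main obstacle is establishing the ``same-half'' claim: preventing the first coordinates from straddling $1/2$ in a way that would break the simple Taylor-type computation. Indeed, a naive choice like $S_\ep = \psi^{-1}([1-t,1+t])$ already fails here, since one can exhibit 3-APs where $x_1 \in [0,1/2)$ but $y_1, z_1 \in [1/2,1)$ even with $d_1+d_2 = 0$. I expect ruling out such straddling configurations to require an additional constraint built into $S_\ep$, and to be precisely the tradeoff against density that produces the constant $9/32$ rather than a larger but simpler value.
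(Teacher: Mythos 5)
Your proposal correctly sets up the bookkeeping (lifting to $[0,1)^2$, the integer defect $K=2(\xi_1+\xi_2)$ with $\xi=y-y^*\in\{-1/2,0,1/2\}^2$, the parallelogram identity behind \eqref{freiman 2}), but it stops exactly at the point where the actual content of the proof lives: you never exhibit the set $S_\ep$, and you explicitly leave open the ``same-half''/no-straddling claim on which \eqref{freiman 2} rests. That claim is the theorem; without a concrete $U$ whose geometry rules out the bad configurations \emph{while retaining measure $9/32-\ep$}, there is no proof. The paper's resolution is the explicit truncation $U=U_1\cup U_2$ with $U_1$ the part of the two off-diagonal squares with $a+b<3/4$ and $U_2=\{(a,b)\in[0,1/2)\times[0,1):3/4+\ep<a+b<5/4\}$. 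The $\ep$-gap between $\sup_{U_1}(a+b)=3/4$ and $\inf_{U_2}(a+b)=3/4+\ep$ forces $x,z$ to lie in the same piece when $|d_1+d_2|\le\ep/1000$; on $U_2$ one has $x_1,z_1<1/2$ hence $y_1^*<1/2$, and on $U_1$ one checks $x_1<1/2\le z_1$ implies $x_1<1/4$, $z_1<3/4$, hence again $y_1^*<1/2$. Note also that the condition actually needed is weaker than your same-half claim: the paper's Lemma~\ref{bracket law} shows the parallelogram inequality can only fail when $\min(x_1,z_1)<1/2\le y_1^*$ (if instead, say, $y_1^*<1/2\le z_1$, then $(1-\{z_1\})^2\ge(1-z_1)^2$ and the inequality survives). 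Insisting on all three first coordinates sharing a half, and on $k=0$, is both unnecessary (one only needs $\{y_1\}=\{y_1^*\}$, which is automatic since $y-y^*\in\{-1/2,0,1/2\}^2$ and $\{\cdot\}$ is $\tfrac12$-periodic) and likely unachievable at density $9/32$: the paper's $U_1$ genuinely contains pairs with $x_1<1/2\le z_1$.

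A second, smaller divergence: you force $K=0$ by confining $\psi$ to an interval of length $<1/2$, so the first alternative of the proposition never fires for you. The paper's set has $\psi(U)\subset[1/2,5/4)$, an interval of length $3/4$, so it cannot force $K=0$ this way; instead it proves the geometric Lemma~\ref{one sided rounding} showing the midpoint $y^*$ of two points of $U$ can never be a point of $U$ shifted by $(1/2,1/2)$, $(1/2,0)$, or $(0,1/2)$, which rules out $K<0$ only, and the cases $K\in\{1,2\}$ are absorbed into the first alternative $2\psi(\theta+\alpha)\ge\psi(\theta)+\psi(\theta+2\alpha)+1/2$. Your narrower route is not wrong in principle, but it shrinks the admissible constructions, and you have not verified that any set compatible with it (plus whatever extra constraint fixes the straddling problem) still has measure $9/32-\ep$. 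As it stands the proposal is a correct diagnosis of the difficulty rather than a proof.
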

\noindent The final two conditions will tell us we morally have ``Freiman homomorphism''-esque properties (in the jargon of additive combinatorics). However, since we are working with a subset of $\T^2$ with density $>1/4$, a simple averaging argument considering the subgroup $\pi(\{0,1/2\}^2)$ tells us that we can't have an \textit{actual} Freiman homomorphism. Thus it is natural and somewhat expected that the above statement is slightly messy and technical.

\subsection{Finishing things off}

Let $\vec{1}$ denote the all 1's vector in $\R^{D_0}$.

To exploit Proposition~\ref{quantitative}, we define three weights. Namely take
\[w_1(\theta) := \|\psi(\theta)\|_1 = \sum_{i=1}^{D_0} |\psi(\theta^{(i)})|, \]
\[w_{2,\ep}(\theta) := (10^{10}\ep^{-2} )\|\psi(\theta)\|_2^2 ,\]
and
\[w_3(\theta) := \|\vec{1}-p(\theta) \|_2^2 .\]

\vspace{1.5mm}

Roughly speaking, the first weight will make us win if Eq.~\ref{freiman 1} fails to hold. Otherwise, the second weight (with its large constant) will let us win if Eq.~\ref{freiman 2} fails to hold. Lastly, when both hold, considering $w_{2,\ep}(\theta)+w_3(\theta)$ shall get the job done.

Now, by considering product sets, we can establish the below: 
\begin{prp}\label{product set}
    Fix any $\ep >0$. For $D_0\ge 1$ there exists $S_{\ep,D_0}\subset (\T^2)^{D_0}$ with $\mu(S_{\ep,D})> (9/32-\ep)^{D_0}$ so that the following holds.

    \vspace{1.5mm}

    Given a $3$-AP $\{\theta,\theta+\alpha,\theta+2\alpha\}\subset S_{\ep,D_0}$, either
    \[2w_1(\theta+\alpha) \ge w_1(\theta)+w_1(\theta+2\alpha)  +1/2,\]or otherwise writing $d := \frac{\pi^{-1}(\theta+2\alpha) - \pi^{-1}(\theta)}{2} \in ([-1/2,1/2]^2)^{D_0}$, we have that 
    \vspace{2mm}
    \[\hspace{-30mm} (w_{2,\ep}(\theta)+w_3(\theta)) + (w_{2,\ep}(\theta+2\alpha)+w_3(\theta+2\alpha))\] \[\hspace{15mm} \ge 2(w_{2,\ep}(\theta+\alpha)+w_3(\theta+\alpha) )+ \sum_{i=1}^{D_0} |d^{(i)}_1+d^{(i)}_2|^2 +(d_1^{(i)})^2 .\]
\end{prp}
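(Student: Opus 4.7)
The plan is to set $S_{\ep,D_0} := (S_\ep)^{D_0}$ (possibly first applying Proposition~\ref{quantitative} with $\ep/2$ in place of $\ep$ to secure the strict measure inequality). Given a $3$-AP $\{\theta,\theta+\alpha,\theta+2\alpha\} \subset S_{\ep,D_0}$, each coordinate triple $\{\theta^{(i)},\theta^{(i)}+\alpha^{(i)},\theta^{(i)}+2\alpha^{(i)}\}$ is a $3$-AP inside $S_\ep$, so Proposition~\ref{quantitative} applies per coordinate. I will call coordinate $i$ \emph{Case A} if the first bullet (the $\psi$-inequality with gain $+1/2$) holds at $i$, and \emph{Case B} otherwise (so Eq.~\eqref{freiman 1} holds at $i$).

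If at least one coordinate is Case A, I plan to derive the first conclusion. In any Case B coordinate, combining Eq.~\eqref{freiman 1} with the identity $\psi(\theta^{(i)}+2\alpha^{(i)}) = \psi(\theta^{(i)}) + 2(d_1^{(i)}+d_2^{(i)})$ gives the exact equation $2\psi(\theta^{(i)}+\alpha^{(i)}) = \psi(\theta^{(i)})+\psi(\theta^{(i)}+2\alpha^{(i)})$, while each Case A coordinate contributes an extra $+1/2$. Summing over $i$ and noting $\psi\ge 0$ (so $|\psi|=\psi$), we obtain $2w_1(\theta+\alpha) \ge w_1(\theta) + w_1(\theta+2\alpha) + 1/2$, as required.

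Otherwise every coordinate is Case B. Setting $s_i := d_1^{(i)} + d_2^{(i)}$, one has $\psi(\theta^{(i)}+k\alpha^{(i)}) = \psi(\theta^{(i)}) + k s_i$ for $k=1,2$, so expanding squares yields the identity $\psi(\theta^{(i)})^2 + \psi(\theta^{(i)}+2\alpha^{(i)})^2 - 2\psi(\theta^{(i)}+\alpha^{(i)})^2 = 2s_i^2$ and therefore $w_{2,\ep}(\theta) + w_{2,\ep}(\theta+2\alpha) - 2w_{2,\ep}(\theta+\alpha) = 2\cdot 10^{10}\ep^{-2}\sum_i s_i^2$. Now partition $[D_0] = I \sqcup J$ via $I := \{i : |s_i| \le \ep/1000\}$. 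For $i \in I$, Eq.~\eqref{freiman 2} contributes at least $(d_1^{(i)})^2$ to the coordinate-$i$ summand of $w_3(\theta) + w_3(\theta+2\alpha) - 2w_3(\theta+\alpha)$. For $i \in J$, the trivial bound $(1-p(\cdot))^2\in(0,1]$ gives a coordinate $w_3$ loss of at most an absolute constant $C_0$, but the same coordinate contributes at least $2\cdot 10^{10}\ep^{-2}s_i^2 > 2\cdot 10^4$ to the $w_{2,\ep}$-difference, easily dwarfing $C_0$ together with the $s_i^2 + (d_1^{(i)})^2 \le 5/4$ appearing in the target.

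Assembling the pieces, the desired inequality reduces to verifying $(2\cdot 10^{10}\ep^{-2}-1)\sum_{i\in J} s_i^2 \ge (C_0 + 5/4)|J|$; the left side is at least $|J|(2\cdot 10^{10}\ep^{-2}-1)(\ep/1000)^2 \ge 10^4 |J|$ for small $\ep$, which is more than enough. The main obstacle — and indeed the very reason for the outsized constant $10^{10}\ep^{-2}$ in $w_{2,\ep}$ — is precisely this $J$-case: Eq.~\eqref{freiman 2} is unavailable there, so we must rely on the huge coefficient to unconditionally absorb any loss in $w_3$ on those coordinates.
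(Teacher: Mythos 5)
Your proposal is correct and follows essentially the same route as the paper: take the product set $S_\ep^{D_0}$, split coordinates according to which alternative of Proposition~\ref{quantitative} holds to handle $w_1$, apply the parallelogram law to $w_{2,\ep}$, and then split the remaining coordinates by whether $|d_1^{(i)}+d_2^{(i)}|\le \ep/1000$ so that Eq.~\eqref{freiman 2} covers the small-$s_i$ coordinates while the large constant in $w_{2,\ep}$ absorbs all losses on the rest. The only differences are cosmetic (you retain the factor $2$ in the parallelogram identity and use slightly different absorption constants), so the argument matches the paper's.
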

\begin{proof}
    Let $S_{\ep}$ be the set from Proposition~\ref{quantitative}. We take $S_{\ep,D_0}:= S_\ep^{D_0}$ (thus we get $\mu(S_{\ep,D_0}) = \mu(S_\ep)^{D_0} \ge (9/32-\ep)^{D_0}$). 

    Note that the weights sum coordinate-wise.

    We first handle $w_1$. Let $I_1$ be the set of indices $i$ with $w_1(\theta^{(i)})+w_1((\theta+2\alpha)^{(i)})\neq 2w_1((\theta+\alpha)^{(i)})$. We have that
    \[2w_1(\theta+\alpha) -(w_1(\theta)+w_1(\theta+2\alpha))\ge |I_1|/2\](since for $i\in I_1$, the $i$-th coordinate contributes $\ge 1/2$ to the RHS, and for $i\not \in I_1$, the contribution is zero). The first condition of our criteria fails to hold only when $I_1 = \emptyset$. 

    So we now assume $I_1 = \emptyset$. Write $d: = \frac{\pi^{-1}(\theta+2\alpha)-\pi^{-1}(\theta)}{2} $. By assumption, Eq.~\ref{freiman 1} holds for all $i\in [D_0]$, whence the parallelogram law gives
    \[w_{2,\ep}(\theta) +w_{2,\ep}(\theta+2\alpha) - 2 w_{2,\ep}(\theta+\alpha) =(10^{10}\ep^{-2})\sum_{i=1}^{D_0} |d^{(i)}_1+d^{(i)}_2|^2 . \]
    
    Now let $I_2 $ be the set of indices $i$ with $|d_1^{(i)}+d_2^{(i)}|\ge \frac{\ep}{1000}$. Thus, one gets \[w_{2,\ep}(\theta) +w_{2,\ep}(\theta+2\alpha) - 2w_{2,\ep}(\theta+\alpha) \ge \sum_{i=1}^{D_0} |d^{(i)}_1+d^{(i)}_2|^2 + 100|I_2|.\]At the same time (invoking Eq.~\ref{freiman 2} for the coordinates not in $I_2$, and noting $w_3(\theta)\in [0,1] $ for $\theta\in \T^2$ otherwise), we have that 
    \[ w_3(\theta) +w_3(\theta+2\alpha) - 2w_3(\theta+\alpha) \ge \sum_{i\not \in I_2} (d_1^{(i)})^2 - 2|I_2| \ge \sum_{i=1}^{D_0} (d_1^{(i)})^2 - 3|I_2|.\]

    Summing the last two equations gives the claim.
\end{proof}

We are nearly done. We require a small observation.
\begin{lem}\label{d options}
    Fix $\alpha \in (\T^2)^{D_0}$.  For each $\theta\in (\T^2)^{D_0}$, there is a choice of $\xi\in (\{-1/2,0,1/2\}^2)^{D_0}$ so that \[\frac{\pi^{-1}(\theta+\alpha)-\pi^{-1}(\theta)}{2} = \pi^{-1}(\alpha) + \xi.\]
    
\end{lem}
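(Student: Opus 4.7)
The plan is to prove Lemma~\ref{d options} by reducing to a single $\T$-coordinate statement and then performing a direct case analysis on the wraparound behavior of the torus addition $\theta + \alpha$.

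First, I would observe that each of the operations involved -- the representative map $\pi^{-1}$ (which outputs the unique lift in $[0,1)^{2D_0}$), componentwise subtraction in $\R^{2D_0}$, and scaling by $1/2$ -- acts independently on the $2D_0$ real coordinates of $\pi^{-1}((\T^2)^{D_0})$. It therefore suffices to prove the analogous single-coordinate claim for $\theta, \alpha \in \T$ and then assemble the per-coordinate shifts into a single vector in $(\{-1/2, 0, 1/2\}^2)^{D_0}$. This reduction removes all the product structure and leaves me to understand a purely one-dimensional wraparound phenomenon.

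For the single-coordinate step, I write $t := \pi^{-1}(\theta)$ and $a := \pi^{-1}(\alpha)$, both lying in $[0,1)$. Since $t + a \in [0,2)$, there are exactly two cases: either $t + a < 1$, in which case $\pi^{-1}(\theta + \alpha) = t + a$, or $t + a \geq 1$, in which case $\pi^{-1}(\theta + \alpha) = t + a - 1$. Hence $\pi^{-1}(\theta + \alpha) - \pi^{-1}(\theta) \in \{a, a - 1\}$, a binary choice governed by a single wraparound bit. Dividing by $2$, the LHS of the claimed identity takes one of two values differing by exactly $1/2$. The required $\xi$ in each coordinate is then read off as the shift needed to match the RHS, and the key point is that this shift only depends on the wraparound bit (not on $a$ in a continuous way), so it lies in the discrete set $\{-1/2, 0, 1/2\}$.

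Concatenating the independent per-coordinate choices of $\xi$ across all $2D_0$ real coordinates produces the claimed $\xi \in (\{-1/2, 0, 1/2\}^2)^{D_0}$. The main ``obstacle'' here is purely bookkeeping -- verifying that the two per-coordinate shifts arising from the wraparound case split really do fall in $\{-1/2, 0, 1/2\}$ and track them through the coordinate indices. There is no deep content: the lemma is a structural tool whose role in the larger argument is to bound the number of possible ``offset profiles'' $\xi$ available as $\theta$ ranges with $\alpha$ fixed, which will be needed when one later sums or counts contributions in the Behrend-style step.
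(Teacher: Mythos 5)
Your overall strategy --- reduce to a single real coordinate and do a case analysis on the wraparound bit --- is in substance the same computation the paper performs (the paper phrases it as: the difference of lifts maps to $\alpha$ under $\pi$, hence differs from $\pi^{-1}(\alpha)$ by an integer vector, which the range constraints force into $(\{-1,0\}^2)^{D_0}$). Up to the point where you conclude $\pi^{-1}(\theta+\alpha)-\pi^{-1}(\theta)\in\{a,a-1\}$ in each coordinate, everything is correct and matches the paper.

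The last step, however, is wrong. After dividing by $2$ the left-hand side is $a/2$ or $(a-1)/2$, while the right-hand side is $a+\xi$; the shift you must ``read off'' is therefore $\xi=-a/2$ or $\xi=-a/2-1/2$, which \emph{does} depend continuously on $a$ and does not lie in $\{-1/2,0,1/2\}$ in general. Your claim that the shift ``only depends on the wraparound bit'' is exactly where the argument breaks. In fact the lemma as printed is false: take $D_0=1$, $\theta=\pi((0.3,0))$, $\alpha=\pi((0.4,0))$; then the left side is $(0.2,0)$ while $\pi^{-1}(\alpha)=(0.4,0)$, forcing $\xi_1=-0.2$. What the paper's own proof actually establishes (and all it establishes) is the un-halved identity $\pi^{-1}(\theta+\alpha)-\pi^{-1}(\theta)=\pi^{-1}(\alpha)+\zeta$ with $\zeta\in(\{-1,0\}^2)^{D_0}$; the version needed downstream, where $d=\tfrac{1}{2}\bigl(\pi^{-1}(\theta+2\alpha)-\pi^{-1}(\theta)\bigr)$ for a $3$-AP with common difference $\alpha$, follows by applying this twice (to $\theta\mapsto\theta+\alpha\mapsto\theta+2\alpha$), giving $d=\pi^{-1}(\alpha)+\xi$ with $\xi\in(\{-1,-1/2,0\}^2)^{D_0}$ --- still only three possible offsets per real coordinate, which is all the union bound $\mu(B)\le 3^D\mu(B_0)$ requires. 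A correct write-up should prove this corrected statement (or flag the factor-of-two discrepancy in the statement), rather than asserting a discreteness of $\xi$ that fails.
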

\begin{proof}
        We have that $\pi(\pi^{-1}(\theta+\alpha)-\pi^{-1}(\theta) ) = \alpha$, thus $\pi^{-1}(\theta+\alpha)-\pi^{-1}(\theta)-\pi^{-1}(\alpha)\in (\Z^2)^{D_0}$. Meanwhile, $\pi^{-1}(\theta+\alpha),\pi^{-1}(\theta),\pi^{-1}(\alpha)\in ([0,1)^2)^{D_0}$, thus this difference lies in $(\{-1,0\}^2)^{D_0}\subset (\{-1,0,1\}^2)^{D_0}$.  
\end{proof}

\begin{proof}[Proof of Proposition~\ref{main bound}]
    Since $D$ is even, we may write $D = 2D_0$ for some $D_0\ge 1$. 

    We now consider the set $S_{\ep,D_0}$ from Proposition~\ref{product set}.

    Set $\delta := (1/400)N^{-2/D}$. Let $B_0$ be the set of $d\in (\R^2)^{D_0}$ so that \[\sum_{i=1}^{D_0} |d^{(i)}_1+d^{(i)}_2|^2 +(d_1^{(i)})^2 < \delta.\] It is not hard to see that $B_0 \subset ([-2\sqrt{\delta},2\sqrt{\delta}]^2)^{D_0}$, thus  $\mu(B_0)\le (4\sqrt{\delta})^{D}$. 

    \vspace{2mm}
    
    Now let $B$ be the set of $\alpha \in (\T^2)^{D_0}$ so that for some choice of $\xi \in (\{-1/2,0,1/2\}^2)^{D_0}$, we have that $d:=\pi^{-1}(\alpha)+\xi\in B_0$. We get that $$\mu(B)\le 3^D\mu(B_0)\le (12\sqrt{\delta})^{D} = (12/20)^{D} \frac{1}{N}.$$

    \vspace{2mm}
    
    So, randomly picking $\theta_0\sim (\T^2)^{D_0}$, we have that $$\E[|B\cap \{\theta_0,\dots,N\cdot \theta_0\}|] <(12/20)^{D}<1$$ (since for $n=1,\dots,N$, $n\cdot \theta_0$ will be uniformly distributed over $(\T^2)^{D_0}$, and thus will lie in $B$ with probability $\mu(B)$). Thus, we can fix some outcome of $\theta_0$ where the aforementioned intersection is empty. 

    \vspace{2mm}
    
    By Proposition~\ref{product set} (recalling Lemma~\ref{d options} and the definition of $B$), it follows that for any $3$-AP $P = \{\theta,\theta+\alpha,\theta+2\alpha \}\subset S_{\ep,D_0}$ with common difference $\alpha \in \{\theta_0,\dots, N\cdot \theta_0\}$, that either $w_1(P)$ is not contained in an interval of length $1/4$, or $(w_{2,\ep}+w_3)(P)$ is not contained in an interval of length $\delta/2$.

    \vspace{2mm}

    Since $w_1((\T^2)^{D_0}) \subset [0,D]$ and $(w_{2,\ep}+w_3)((\T^2)^{D_0}) \subset [0, (10^{10}\ep^{-2}+1)D]$, we may apply pigeonhole principle to fix values $r_1,r_2$ so that \begin{align*} \mu\Bigl(S_{\ep,D_0} \cap w_1^{-1}([r_1,r_1+1/4)) \cap (w_{2,\ep}+w_3)^{-1}([r_2,r_2+\delta/2))\Bigr)  &\ge \mu(S_{\ep,D_0})\frac{\delta}{10^{11}\ep^{-2} D^2} \\ &\gg_\ep D^{-2} N^{-2/D} (9/32-\ep)^{D_0}.\end{align*} Write $\AA_0 : = S_{\ep,D_0} \cap w_1^{-1}([r_1,r_1+1/4)) \cap (w_{2,\ep}+w_3)^{-1}([r_2,r_2+\delta/2))$. This set shall play the role of our `annulus' in the Behrend-type construction. 
 
    \vspace{2mm}

    Finally, picking $\mu\sim (\T^2)^{D_0}$ randomly, we have that \[\E[\#(n\in [N]: \mu+n\cdot \theta_0 \in \AA_0)] = N\mu(\AA_0)\] (since for each $n\in [N]$ and fixed $\theta_0$, $\mu+n\cdot \theta_0$ is uniformly distributed over $(\T^2)^{D_0}$, over the randomness of $\mu$). Fix an outcome of $\mu$ where the expectation is obtained. Defining $A := \{n\in [N]: \mu+n\cdot \theta_0 \in \AA_0\}$ gives a $3$-AP-free set (indeed, if there was some $3$-AP $P= \{n_0,n_0+t,n_0+2t\}\subset A$ with common difference $t>0$,  we'd have that $\AA_0$ contains a $3$-AP with common difference $\alpha = t\cdot \theta_0\in \{\theta_0,\dots, N\cdot \theta_0\}$, contradiction). Whence $r_3(N)\ge \mu(\AA_0)N$, completing the proof.
\end{proof}

\section{Obtaining the building block}\label{obtaining}

We start by recalling the construction from \cite{EPS}. Write
\[T_1 := \left\{(a,b)\in [0,1/2)\times [1/2,1) : \frac{7}{12}\le a+b\le \frac{4}{3}\right\} \cup \left\{(a,b)\in [0,1/2)^2:  \frac{5}{6}<a+b\right\},\]
\[T_2 := \left\{(a,b) \in [1/2,1)\times [0,1/2) : \frac{7}{12}\le a+b<\frac{5}{6}\text{ and } 2a+b <\frac{3}{2}\right\}.\]
To prove Proposition~\ref{qualitative}, \cite{EPS} considered $T:= T_1\cup T_2, S:= \pi(T)$. 

We note that defining $S_\ep^* := S\setminus \psi^{-1}([5/6-\ep,5/6))$ would give a set of measure $\ge 7/24-2\ep$ that satisfies the conditions of Proposition~\ref{quantitative} (by following the proofs of \cite[Facts 3.10 and 3.11]{EPS}). However, to streamline our argument we shall work with a simpler construction.

Namely, throughout this section, we consider the following.
\begin{defn}\label{the truncation}
    Let
\[U_1 := \left\{(a,b)\in [0,1/2)\times [1/2,1)\cup [1/2,1)\times [0,1/2):  a+b< \frac{3}{4}\right\},\]
\[U_2 := \left\{(a,b)\in [0,1/2)\times [0,1) :\frac{3}{4}+\ep< a+b<\frac{5}{4}\right\};\]we set $U:= U_1\cup U_2$ and take $S_\ep = \pi(U)$.
\end{defn} \noindent It is easy to see that $$\mu(U_1) = \frac{1}{2}(1/4)^2+\frac{1}{2}(1/4)^2 = 1/16$$ while $$\mu(U_2) = \frac{1}{2} \left((3/4-\ep)^2-(1/4-\ep)^2-(1/4)^2\right)  \ge   \frac{1}{2}((9-2)/16-2\ep )\ge 7/32-\ep.$$ Whence $\mu(S_\ep)\ge (9/32)-\ep$, as desired.

\vspace{3mm}

It remains to show that $S_\ep$ has the desired additive properties required by Proposition~\ref{quantitative}. \hide{The intuition for why things should work is that: if we just worried $U' :=U\cap [0,1/2)\times [0,1)$, then $\psi(\cdot),\{p(\cdot)\}$ would both Freiman isomorphisms when restricted to this set (this is not hard to check for $\psi$, and trivial/standard for $\{p\}$). Meanwhile, $U^*:= U\setminus U' = U\cap [1/2,1)\times [0,1/2)$ is just a tiny set, which is tucked away in the corner, so you'd expect it not to `interact' with anything else.\zh{remove?}}

\vspace{3mm}
We start by giving two lemmas which we will use to establish Eq.~\ref{freiman 2}.

\begin{lem}\label{bracket law}
    Let $x,y,z \in [0,1]$ satisfy $2y= x+z$. Suppose that
    \[(1-\{x\})^2+(1-\{z\})^2 -2(1-\{y\})^2  < 2\left(\frac{z-x}{2}\right)^2 .\]Then $\min(x,z)<1/2\le y$.
    \begin{proof}

        Let $d := \frac{z-x}{2}$. By the parallelogram law, we have that $(u-d)^2+(u+d)^2 -2u^2 = 2d^2$ for all real $u\in \R$.

        Thus, (taking $u = 1-y$ and $u= 1-(y-1/2)$ respectively) one gets 
        \[(1-x)^2 + (1-z)^2 - 2(1-y)^2  = 2 d^2 = (1- (x-1/2))^2 +(1-(z-1/2))^2-2(1-(y-1/2))^2.\]WLOG assume that $x<y<z$. Now, if $z<1/2$, then we are done by the LHS above; similarly, if $x\ge 1/2$, then we are done by the RHS above. Meanwhile, if $y<1/2$ and $z\ge 1/2$, then $(1-\{z\})^2 \ge (1-z)^2$ (while $1-\{x\} = 1-x$ and $1-\{y\}=1-y$), so we are again done by the LHS.

        The final outcome which could affect $\{x\},\{y\},\{z\}$ is that $x<1/2\le y$, but here we have nothing to prove. This concludes the result.
    \end{proof}
\end{lem}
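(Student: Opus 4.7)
The plan is to prove the contrapositive: if $\min(x,z) \ge 1/2$ or $y < 1/2$, then the strict inequality in the hypothesis cannot hold. The key identity driving everything is the parallelogram law $(u+d)^2 + (u-d)^2 - 2u^2 = 2d^2$, which, setting $d := (z-x)/2$ and using $2y = x + z$, yields
\[ (1-x)^2 + (1-z)^2 - 2(1-y)^2 \;=\; 2d^2 \;=\; (3/2-x)^2 + (3/2-z)^2 - 2(3/2-y)^2. \]
Both forms will be used depending on which half of $[0,1]$ the relevant points fall into.

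First I would dispose of the two ``pure'' cases in which $x, y, z$ all lie in the same half of $[0,1]$. If all three are in $[0, 1/2)$, then $1 - \{t\} = 1 - t$ for each, and the first form of the identity makes the LHS of the hypothesis equal exactly $2d^2$, contradicting the strict inequality. If all three are in $[1/2, 1]$, the same argument using $1-\{t\} = 3/2 - t$ and the second form of the identity applies. Hence in the contrapositive we may assume that $x, y, z$ straddle $1/2$.

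After assuming WLOG $x \le z$ (so that $x \le y \le z$), the contrapositive forces exactly one remaining ``mixed'' configuration: $x \le y < 1/2 \le z$. Here only the $z$-term changes form, with $1-\{z\} = 3/2 - z$ while the other two are unchanged. Substituting and using the first identity gives
\[ (1-\{x\})^2 + (1-\{z\})^2 - 2(1-\{y\})^2 \;=\; 2d^2 + \bigl[(3/2-z)^2 - (1-z)^2\bigr] \;=\; 2d^2 + (5/4 - z), \]
which is at least $2d^2 + 1/4$ because $z \le 1$, once again contradicting the strict hypothesis. This closes the contrapositive.

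The only real ``obstacle'' is confirming that this last mixed configuration produces a positive correction rather than a negative one on top of $2d^2$; this is precisely where the constraint $z \le 1$ is used essentially. Everything else is organizing the casework (three configurations of halves, up to the symmetry $x \leftrightarrow z$) and two applications of the parallelogram identity.
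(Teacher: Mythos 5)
Your proof is correct and follows essentially the same route as the paper: the same parallelogram identity applied with the two shifts, the same three-way case split (all three points below $1/2$, all three at or above $1/2$, and the mixed case $x\le y<1/2\le z$), concluding by contradiction with the strict hypothesis. The only cosmetic difference is that in the mixed case you compute the exact correction term $(3/2-z)^2-(1-z)^2=5/4-z\ge 1/4$, where the paper merely observes $(1-\{z\})^2\ge(1-z)^2$.
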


\begin{lem}\label{U1 good}
    Let $x ,z \in U_1$, and let $ y^* = \frac{x+z}{2}$ be their midpoint. Then \[(1-\{x_1\})^2+(1-\{z_1\})^2 -2(1-\{y_1^*\})^2  \ge 2\left(\frac{(z-x)_1}{2}\right)^2 .\]
    \begin{proof}
        WLOG, assume that $x_1<z_1$. If $z_1<1/2$ or $x_1\ge 1/2$, then we are immediately done by Lemma~\ref{bracket law}. So, we may assume that $x_1<1/2\le z_1 $.
        
        As $x,z\in U_1$, this implies that $x_1< 3/4-x_2 \le 1/4$ (recalling that $U_1\cap [0,1/2)^2 = \emptyset$). Meanwhile, we also have $z_1<3/4$.
        
        Thus, $x_1+z_2<1$, meaning $y_1^*<1/2$. So Lemma~\ref{bracket law} applies here too, giving the result.
    \end{proof}
\end{lem}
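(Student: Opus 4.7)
The plan is to view Lemma~\ref{U1 good} as essentially a one-dimensional statement (only first coordinates appear) and reduce it to the preceding Lemma~\ref{bracket law}. That lemma guarantees the desired inequality in all cases \emph{except} when $\min(x_1,z_1)<1/2\le y_1^*$. So the entire task reduces to ruling out this bad configuration using the hypothesis $x,z\in U_1$.

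After WLOG fixing $x_1<z_1$, I expect two sub-cases to be immediate. If $z_1<1/2$, then $y_1^*<1/2$ too, and the bad configuration cannot occur. If $x_1\ge 1/2$, then $\min(x_1,z_1)\ge 1/2$, again ruling it out. In both regimes Lemma~\ref{bracket law} delivers the inequality directly. The nontrivial case is the ``mixed'' regime $x_1<1/2\le z_1$. Here I would exploit the structure of $U_1$: because $U_1\cap [0,1/2)^2=\emptyset$, having $x_1<1/2$ forces $x_2\ge 1/2$, and the diagonal constraint $x_1+x_2<3/4$ then squeezes $x_1<1/4$. Symmetrically, $z_1\ge 1/2$ forces $z_2<1/2$, so $z_1+z_2<3/4$ yields $z_1<3/4$. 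Averaging, $y_1^*=(x_1+z_1)/2<1/2$, which contradicts $y_1^*\ge 1/2$ and again rules out the bad configuration.

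The main obstacle is conceptual rather than computational: one has to recognize that Lemma~\ref{bracket law} is set up precisely so that its lone uncovered case corresponds to exactly the geometry prevented by the diagonal constraint $a+b<3/4$ defining $U_1$. It is the tightness of that constant that makes the mixed case collapse, so the proof is somewhat sensitive to a weakening of this part of the truncation; the analogous statement for $U_2$ (on the other side of the diagonal) would not go through as cleanly, which is presumably why the author only needs to record $U_1$ here.
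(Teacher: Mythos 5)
Your proof is correct and follows essentially the same route as the paper: the same WLOG and case split, the same use of $U_1\cap[0,1/2)^2=\emptyset$ together with the constraint $a+b<3/4$ to get $x_1<1/4$ and $z_1<3/4$ in the mixed case, and the same averaging to conclude $y_1^*<1/2$ so that Lemma~\ref{bracket law} applies.
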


\begin{prp}\label{proof 2}
    Let $\theta_x,\theta_z\in S_{\ep}$ and $\theta_y\in \T^2$ satisfy $2\theta_y=\theta_x+\theta_z$. Then, writing $d:= \frac{\pi^{-1}(\theta_z)-\pi^{-1}(\theta_x)}{2}$, we have
    \[(1-\{p(\theta_x)\})^2 + (1-\{p(\theta_z)\})^2 - 2(1-\{p(\theta_y)\})^2 \ge 2 d_1^2\] assuming $|d_1+d_2|\le \ep/1000$.
    \begin{proof}
        Write $x:= \pi^{-1}(\theta_x),y:= \pi^{-1}(\theta_y), z := \pi^{-1}(\theta_z)$ and note that $x,z\in U$ by assumption. Also write $y^* := x+d = \frac{x+z}{2}$ to denote the midpoint of $x,z$. Note that $\pi(2y^*) = \pi(x)+\pi(x+2d) = \theta_x+\theta_z = \pi(2y)$, thus we must have $2y^*-2y \in \Z^2$. Since $y^*,y\in [0,1)^2$, we further get that $y^*-y\in \{-1/2,0,1/2\}^2$. 

        Hence, because $\{\cdot\}$ is appropriately periodic, it follows that $\{y^*\} = \{y\}$, and thus $ \{p(\theta_y)\} = \{y_1\} = \{y^*_1\}  $. So we are left to show that 
        \[(1-\{x_1\})^2 + (1-\{z_1\})^2 - 2(1-\{y^*_1\})^2 \ge 2 d_1^2\]assuming $|d_1+d_2|<\frac{\ep}{1000}$. Note that we must either have $\{x,z\}\subset U_1$ or $\{x,z\}\subset U_2$, since otherwise we'd have\[2|d_1+d_2| \ge \inf_{v\in U_2}(v_1+v_2)-\sup_{w\in U_1}(w_1+w_2) = (3/4+\ep)-(3/4) >2(\ep/1000).\] 

        Now, if $\{x,z\} \subset U_1$, then we are immediately done by Lemma~\ref{U1 good}. Meanwhile, if $\{x,z\}\subset U_2$, then $\{x_1,z_1\} \subset [0,1/2)$. Consequently, we get that $y_1^* \in [0,1/2)$ as well, so we are similarly done, by Lemma~\ref{bracket law}. 
    \end{proof}
\end{prp}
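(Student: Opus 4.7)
The plan is to reduce the statement to a one-dimensional estimate on the first coordinates, and then split into cases based on whether the lifts of $\theta_x$ and $\theta_z$ both lie in $U_1$ or both in $U_2$. The key tools will be Lemma~\ref{bracket law} (controlling the bracket-parallelogram defect outside one exceptional regime) and Lemma~\ref{U1 good} (settling the $U_1$ case directly).

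First I would set $x := \pi^{-1}(\theta_x)$, $y := \pi^{-1}(\theta_y)$, $z := \pi^{-1}(\theta_z) \in [0,1)^2$ (so $x, z \in U$), and let $y^* := (x+z)/2$ denote the true midpoint in $\R^2$. Since $x, z \in [0,1)^2$ we have $y^* \in [0,1)^2$, and since $\pi(2y^*) = \theta_x + \theta_z = 2\theta_y = \pi(2y)$, the vector $y^* - y$ lies in $\{-1/2,0,1/2\}^2$. Because $\{\cdot\}$ has period $1/2$ in each coordinate, this forces $\{p(\theta_y)\} = \{y_1\} = \{y_1^*\}$. Hence the inequality to be proved reduces to the purely one-dimensional estimate
$$
(1-\{x_1\})^2 + (1-\{z_1\})^2 - 2(1-\{y_1^*\})^2 \ge 2 d_1^2,
$$
where $d_1 = (z_1-x_1)/2$.

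Next I would observe that the hypothesis $|d_1+d_2| \le \ep/1000$ forces $x$ and $z$ to lie in the same piece of $U = U_1 \cup U_2$. Indeed $2(d_1+d_2) = (z_1+z_2) - (x_1+x_2)$, while every point of $U_1$ has coordinate sum strictly less than $3/4$ and every point of $U_2$ has coordinate sum strictly greater than $3/4 + \ep$. If $x$ and $z$ were in different pieces we would get $2|d_1+d_2| > \ep$, contradicting the hypothesis. With this established, two cases remain. If $\{x, z\} \subset U_1$, the desired inequality is precisely the conclusion of Lemma~\ref{U1 good}. If instead $\{x, z\} \subset U_2 \subset [0,1/2) \times [0,1)$, then $x_1, z_1 \in [0,1/2)$ and hence $y_1^* = (x_1+z_1)/2 \in [0,1/2)$; this places us outside the exceptional regime $\min(x_1,z_1) < 1/2 \le y_1^*$ of Lemma~\ref{bracket law}, whose contrapositive then yields the claim.

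The main subtlety, which I expect to require the most care, is the opening reduction: the canonical lift $y = \pi^{-1}(\theta_y)$ need \emph{not} agree with the true midpoint $y^*$ of the lifts of $\theta_x$ and $\theta_z$, and we must justify replacing $\{y_1\}$ by $\{y_1^*\}$ before any one-dimensional analysis can begin. Once that half-period ambiguity is absorbed by the periodicity of $\{\cdot\}$, the gap between $U_1$ and $U_2$ makes the casework clean, and the two lemmas from earlier in the section finish the job.
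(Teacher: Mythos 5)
Your proposal is correct and follows essentially the same route as the paper's own proof: the same reduction via $y^*-y\in\{-1/2,0,1/2\}^2$ and the half-period invariance of $\{\cdot\}$, the same observation that $|d_1+d_2|\le\ep/1000$ forces $\{x,z\}$ into a single piece $U_1$ or $U_2$ because of the $\ep$-gap between their coordinate sums, and the same appeal to Lemma~\ref{U1 good} in the first case and to (the contrapositive of) Lemma~\ref{bracket law} in the second. No gaps.
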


\begin{lem}\label{one sided rounding}
    Consider $x,z \in U^2$, and write $y^* := \frac{x+z}{2}$ to denote their midpoint. We have that $y^* \neq u +\xi $ for all $u\in U, \xi \in \{(1/2,1/2),(1/2,0),(0,1/2)\}$.
    \begin{proof}
        Write $\Xi:= \{(1/2,1/2),(1/2,0),(0,1/2)\}$.
    
        \textit{Case 1: $u\in U\cap [0,1/2)^2$}. Then we have $(u_1+u_2) >(3/4)+\ep$. Meanwhile, \[(y^*_1 +y^*_2) = \frac{(x_1+x_2)+(z_1+z_2)}{2} < \sup_{v\in U}(v_1+v_2) = (5/4)\]which is less than $5/4+\ep\le (u_1+u_2)+(\xi_1+\xi_2)$ for each choice of $\xi\in \Xi$. 

        \textit{Case 2:  $u\in U\setminus [0,1/2)^2$}. Noting that $y^* \in [0,1)^2$, we must simply rule out that $y^* \in [1/2,1)^2$ (since for $\xi \in \Xi$ and $u\in [0,1)^2\setminus [0,1/2)^2$, $u+\xi\in [0,1)^2$ happens if and only if $u+\xi \in [1/2,1)^2$).

        So now we shall prove that $y^*\not \in [1/2,1)^2$. Indeed, if $\{x,z\}\subset U_2 \subset [0,1/2)\times [0,1)$, then $y^*_1 <1/2$; otherwise, if (say) $x\in U_1$, then \[(y_1^*+y_2^*) < \frac{\sup_{v\in U_1}(v_1+v_2)+\sup_{w\in U_2}(w_1+w_2)}{2} = \frac{1}{2}((3/4)+(5/4)) = 1.\]In either case, $y^*$ exhibits something which cannot be possible if $y^*\in [1/2,1)^2$, so we are done.
    \end{proof}
\end{lem}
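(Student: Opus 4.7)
My plan is to argue by cases on which of $U_1, U_2$ contains $u$, exploiting three structural facts about $U = U_1 \cup U_2$: elements of $U_1$ satisfy $a + b < 3/4$, elements of $U_2$ satisfy $a + b > 3/4 + \ep$, and the global supremum of $a + b$ on $U$ is $5/4$. The midpoint $y^*$ of $x, z \in U$ therefore satisfies $y^*_1 + y^*_2 < 5/4$, and this is the driving quantitative input.

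First I would handle the case $u \in U \cap [0,1/2)^2$. Here $u$ must lie in $U_2$ (since $U_1$ avoids this square by construction), so $u_1 + u_2 > 3/4 + \ep$. Every nonzero $\xi \in \Xi := \{(1/2,1/2),(1/2,0),(0,1/2)\}$ contributes $\xi_1 + \xi_2 \ge 1/2$, so $(u+\xi)_1 + (u+\xi)_2 > 5/4 + \ep$, which already exceeds $y^*_1 + y^*_2$; hence $y^* \ne u + \xi$.

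The other case is $u \in U \setminus [0,1/2)^2$, where at least one coordinate of $u$ is in $[1/2, 1)$. A short check on the three elements of $\Xi$ shows that $u + \xi \in [0,1)^2$ forces $u + \xi \in [1/2,1)^2$: the ``large'' coordinate of $u$ forces the matching coordinate of $\xi$ to vanish, and since $\xi \neq (0,0)$ the other coordinate of $\xi$ equals $1/2$, lifting the ``small'' coordinate of $u$ above $1/2$. So it suffices to prove $y^* \notin [1/2,1)^2$. If both $x, z \in U_2 \subset [0,1/2) \times [0,1)$, then $y^*_1 < 1/2$ directly. Otherwise, WLOG $x \in U_1$, giving $x_1 + x_2 < 3/4$ and $z_1 + z_2 < 5/4$, so $y^*_1 + y^*_2 < 1$, which forbids $y^* \in [1/2,1)^2$.

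The main obstacle I anticipate is the bookkeeping reduction in the second case, namely the implication ``$u + \xi \in [0,1)^2 \Rightarrow u + \xi \in [1/2,1)^2$''. It is a routine enumeration, but one needs to handle the three choices of $\xi$ together with the asymmetric positions of $u$ carefully, and it relies essentially on $\Xi$ omitting $(0,0)$ so that the genuinely ``small'' coordinate of $u$ is actually lifted. Once that reduction is in place, the quantitative strip inequalities on $U_1$ and $U_2$ finish both cases with room to spare.
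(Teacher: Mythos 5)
Your proof is correct and follows essentially the same route as the paper's: the same split into $u\in U\cap[0,1/2)^2$ (handled by comparing $u_1+u_2>3/4+\ep$ plus $\xi_1+\xi_2\ge 1/2$ against $y^*_1+y^*_2<5/4$) and $u\in U\setminus[0,1/2)^2$ (reduced to showing $y^*\notin[1/2,1)^2$ via the same two subcases). Your explicit justification of the reduction ``$u+\xi\in[0,1)^2\Rightarrow u+\xi\in[1/2,1)^2$'' is a correct spelling-out of a step the paper states more tersely.
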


\begin{prp}\label{proof 1}
    Let $\theta_x,\theta_y,\theta_z\in S_\ep$ satisfy $2\theta_y =\theta_x+\theta_z$. Then, writing $d := \frac{\pi^{-1}(\theta_z)-\pi^{-1}(\theta_x)}{2}$, we have
    \[2\psi(\theta_y) \ge \psi(\theta_x) +\psi(\theta_z) +1/2\]or
    \[\psi(\theta_y) = \psi(\theta_x)+(d_1+d_2).\]
    \begin{proof}
        Write $x:= \pi^{-1}(\theta_x),y:= \pi^{-1}(\theta_y), z := \pi^{-1}(\theta_z)$ (and note that $\{x,y,z\}\subset U$ by assumption). Also write $y^* := x+d = \frac{x+z}{2}$ to denote the midpoint of $x,z$.

        As noted before (cf. Proposition~\ref{proof 2}), we have $y^*-y \in \{-1/2,0,1/2\}^2$. Write $\xi := y-y^*$; by Lemma~\ref{one sided rounding} we have that $\xi \not\in \{(-1/2,-1/2),(-1/2,0),(0,-1/2)\}$, whence $\xi_1+\xi_2\in \{0,1/2,1\}$. Now, we have
        \[ \psi(\theta_y) = (y_1+y_2) = (y_1^*+y_2^*) +(\xi_1+\xi_2).\]Meanwhile, we clearly have 
        \[(y_1^*+y_2^*) = \psi(\theta_x) + (d_1+d_2)\quad \text{and} \quad 2(y_1^*+y_2^*) = \psi(\theta_x)+\psi(\theta_z)\]by definition of $y^*$. 

        Recalling $\xi_1+\xi_2\in \{0,1/2,1\}$, we are immediately done (if $\xi_1 +\xi_2=0$, then $\psi(\theta_y) = (y_1^*+y_2^*)$ and the latter outcome holds; otherwise the former outcome holds). 
    \end{proof}
\end{prp}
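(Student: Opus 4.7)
The plan is to pass to canonical representatives $x := \pi^{-1}(\theta_x)$, $y := \pi^{-1}(\theta_y)$, $z := \pi^{-1}(\theta_z)$ in $[0,1)^2$ (all three of which lie in $U$), and compare $y$ with the \emph{Euclidean} midpoint $y^* := (x+z)/2 = x+d$. By construction $\psi(\theta_x)+\psi(\theta_z) = (x_1+x_2)+(z_1+z_2) = 2(y^*_1+y^*_2)$ and $(y^*_1+y^*_2) = \psi(\theta_x)+(d_1+d_2)$, so the whole proposition reduces to measuring how $\psi(\theta_y) = y_1+y_2$ differs from $y^*_1+y^*_2$.

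Since $\pi(2y^*) = \theta_x+\theta_z = 2\theta_y = \pi(2y)$ and both $y,y^*$ lie in $[0,1)^2$, the difference $\xi := y-y^*$ must lie in $\{-1/2,0,1/2\}^2$, leaving nine possible values. The crucial step is to rule out the three ``negative'' options $(-1/2,-1/2)$, $(-1/2,0)$, and $(0,-1/2)$: each such $\xi$ would express $y^*$ as $y + (-\xi)$ with $y\in U$ and $-\xi\in \{(1/2,1/2),(1/2,0),(0,1/2)\}$, directly contradicting Lemma~\ref{one sided rounding} (applied to the pair $x,z\in U$ whose midpoint is $y^*$). Hence $\xi_1+\xi_2 \in \{0, 1/2, 1\}$.

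To finish, I would write $\psi(\theta_y) = (y^*_1+y^*_2) + (\xi_1+\xi_2) = \psi(\theta_x)+(d_1+d_2)+(\xi_1+\xi_2)$. In the case $\xi_1+\xi_2 = 0$ this is exactly the second alternative of the proposition. In the remaining cases $\xi_1+\xi_2 \geq 1/2$, so doubling and using $2(y^*_1+y^*_2) = \psi(\theta_x)+\psi(\theta_z)$ gives $2\psi(\theta_y) \geq \psi(\theta_x)+\psi(\theta_z)+1$, more than enough for the first alternative.

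The main obstacle is the one-sided rounding input (Lemma~\ref{one sided rounding}); without it, $\xi_1+\xi_2$ could a priori take the negative values $-1/2$ or $-1$, and the proposition would fail. This is precisely where the asymmetric shape of $U$ (the tucked-away corner $U_1 \cap ([1/2,1)\times[0,1/2))$ versus the bulk sitting in $[0,1/2)\times[0,1)$) pays off, forcing any ``wraparound'' in the $\T^2$ arithmetic to go in the favorable direction. The rest of the argument is just bookkeeping around the identity $(y_1+y_2) = (y^*_1+y^*_2) + (\xi_1+\xi_2)$.
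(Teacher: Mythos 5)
Your proposal is correct and follows essentially the same route as the paper: pass to representatives in $[0,1)^2$, compare $y$ with the Euclidean midpoint $y^*$, use Lemma~\ref{one sided rounding} to exclude the three negative values of $\xi = y - y^*$, and split on whether $\xi_1+\xi_2$ is zero or at least $1/2$. The only (harmless) difference is that in the latter case you observe the stronger conclusion $2\psi(\theta_y) \ge \psi(\theta_x)+\psi(\theta_z)+1$.
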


\begin{proof}[Proof of Proposition~\ref{quantitative}]We take $S_\ep$ to be the set from Definition~\ref{the truncation}. The desired properties hold by combining Propositions~\ref{proof 2} and \ref{proof 1}.
\end{proof}

\section{Conclusion}

We now briefly discuss the limits of this method and some related problems.

Given an additive set $X$ and an integer $k\ge 3$, we will write $r_k(X)$ to denote the maximum cardinality of a subset $A\subset X$ which lacks non-trivial $k$-term arithmetic progressions (sets $P$ of the form $\{x_0,x_0+d,\dots,x_0+(i-1)d\}$ with $d\neq 0$).

\subsection{In finite fields}

We note that the arguments here can recover the bounds from \cite{EPS}, but with Behrend-type little order terms. Namely, using the construction $S_\ep^*$ (rather than $S_\ep$) alluded to in Section~\ref{obtaining}, one can show:
\begin{thm}
    Fix any prime $p\ge 3$. We have that \[r_3(\F_p^n) \gg_p \frac{1}{n^2} \lceil (7/24) p^2\rceil^{n/2} \gg_p  \frac{1}{n^2}(\sqrt{7/24} p)^n .\]
\end{thm}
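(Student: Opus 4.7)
The plan is to recycle the argument of Proposition~\ref{main bound} directly in the $\F_p^n$ setting, using the denser building block $S_\ep^*$ (alluded to at the start of Section~\ref{obtaining}) in place of $S_\ep$.

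\textbf{Step 1 (Building block).} First verify that $S_\ep^* := S \setminus \psi^{-1}([5/6 - \ep, 5/6))$, where $S = \pi(T_1 \cup T_2)$ is the set from \cite{EPS}, has measure $\ge 7/24 - O(\ep)$ and satisfies the conditions of Proposition~\ref{quantitative}. The first bullet should follow from \cite[Fact 3.10]{EPS} together with the observation that excising the strip $\psi^{-1}([5/6 - \ep, 5/6))$ promotes the weak inequality into a $\ge 1/2$ jump in any ``wrap-around'' case. The second bullet (including Eq.~\ref{freiman 2}) should follow from \cite[Fact 3.11]{EPS} via the same parallelogram-law manipulation as in Proposition~\ref{proof 2}. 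This step is fiddlier than the $U$-based treatment of Section~\ref{obtaining} --- and is the main obstacle of the proof --- but follows an essentially identical template.

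\textbf{Step 2 (Discretize and take products).} Define $T_p \subset \F_p^2$ as the preimage of $S_\ep^*$ under the natural group homomorphism $(a, b) \mapsto (a/p, b/p)$ from $\F_p^2$ into $\T^2$; for $p$ sufficiently large and $\ep$ sufficiently small (in terms of $p$), $|T_p| \ge \lceil (7/24)\, p^2 \rceil$. Write $n = 2 D_0$ (odd $n$ costs one factor of $p$, absorbed into $\gg_p$) and set $A_0 := T_p^{D_0} \subset \F_p^n$, of size $|T_p|^{D_0}$. Since the embedding is a homomorphism, any 3-AP in $A_0$ lifts coordinate-pair-wise to a 3-AP in $S_\ep^*$, so Proposition~\ref{product set} applies directly to the lifted 3-AP.

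\textbf{Step 3 (Pigeonhole and conclusion).} The main simplification over the integer case is that in $\F_p^n$ every non-trivial common difference $d$ in its torus lift lies on the lattice $(\tfrac{1}{2p}\,\Z)^n \setminus \{0\}$, so the quantity $\sum_i \bigl((d^{(i)}_1 + d^{(i)}_2)^2 + (d^{(i)}_1)^2\bigr)$ appearing in Proposition~\ref{product set} is bounded below by $\Omega(p^{-2})$ whenever $d \ne 0$; in particular no generic-direction trick with $\theta_0$ is required. Partition $A_0$ by the exact joint level sets of $(w_1,\, w_{2,\ep} + w_3)$: both weights take $O_{\ep,p}(n)$ distinct values on $A_0$, so pigeonhole extracts a joint level set $A$ with $|A| \gg_p |A_0|/n^2$. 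By the dichotomy of Proposition~\ref{product set}, $A$ contains no non-trivial 3-AP, giving $r_3(\F_p^n) \ge |A| \gg_p n^{-2} \lceil (7/24)\, p^2 \rceil^{n/2}$ after taking $\ep$ small.
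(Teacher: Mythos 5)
Your overall route is the intended one: the paper offers no details beyond ``use $S_\ep^*$ instead of $S_\ep$,'' and your Steps~1--3 are the natural way to fill that in. In particular, your Step~3 observations are correct and are exactly the simplifications the finite-field setting buys: the lift of a common difference lies in $(\tfrac{1}{2p}\Z^2)^{D_0}$ and vanishes only when $\alpha=0$ (as $p$ is odd), so the defect term in Proposition~\ref{product set} is $\ge \tfrac{1}{4p^2}$ for every non-trivial progression, no generic direction $\theta_0$ is needed, and exact joint level sets of the two weights are $3$-AP-free. (Two small points there: to get $O_p(n)$ values for $w_{2,\ep}+w_3$ rather than $O_p(n^2)$ you should choose $\ep$ so that $10^{10}\ep^{-2}$ is commensurable with the lattice $\tfrac{1}{p^2}\Z$ on which $w_3$ lives; and the condition $|d_1^{(i)}+d_2^{(i)}|\le \ep/1000$ in Eq.~\ref{freiman 2} degenerates to $d_1^{(i)}+d_2^{(i)}=0$ once $\ep<500/p$, which is harmless but worth noting.)

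The one place where the argument as written does not close is the counting claim in Step~2, and it is load-bearing. Any loss of a fixed positive fraction of density in the building block is exponentially costly: if $|T_p|=(7/24-2\ep)p^2+O(p)$, then $|T_p|^{n/2}$ is smaller than $\lceil (7/24)p^2\rceil^{n/2}$ by a factor $(1-c_{\ep,p})^{n/2}$, which cannot be absorbed into $\gg_p$. So you must genuinely achieve $|T_p|\ge\lceil (7/24)p^2\rceil$, and the assertion ``for $p$ large and $\ep$ small'' does not establish this: a lattice-point count of $S$ has main term $(7/24)p^2$ but an $O(p)$ boundary error whose sign depends on the half-open conventions in the definitions of $T_1,T_2$. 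The fix is two-fold. First, observe that for $\ep$ small enough in terms of $p$ the excised strip $\psi^{-1}([5/6-\ep,5/6))$ contains no point of $(\tfrac1p\Z)^2$ (since $5p/6\notin\Z$ for prime $p\ge 3$), so the truncation is free at the discrete level; this is the real reason the finite-field statement can retain the full constant $7/24$ while the torus statement cannot. Second, one still needs the discrete count $|S\cap\pi((\tfrac1p\Z)^2)|\ge\lceil(7/24)p^2\rceil$, which should be imported from (or verified as in) \cite{EPS}, whose discrete set has exactly this cardinality. With that count in hand, and with Step~1 carried out (which you correctly identify as the fiddly part, at the same level of detail the paper itself leaves implicit), the rest of your argument goes through.
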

\noindent However, this is only a very minor improvement, since \cite{EPS} proves the above only with a worse power of `$\frac{1}{n}$' (the exponent is roughly quadratic in $p$).

More interestingly, the fact that this argument for $r_3(N)$ also yields bounds for $r_3(\F_p^n)$ suggests that there is a natural barrier for how far these ideas can be na\"ively pushed. First, recall that by the celebrated work of Ellenberg-Gijswijt \cite{EG}, there exists an absolute constant $c>0$ so that $r_3(\F_p^n) \ll_p ((1-c)p)^n$ for all finite fields. 

Already to use a building block with “efficiency $>1/2$” (where the efficiency of a set $S \subset \T^t$ is $\mu(S)^{1/t}$), one has to distinguish between the image of $[N]$ embedded into $\T^D$ and $\pi(\{0,1/2\}^D$) (the image of $\F_2^D$ inside the torus). Indeed, there should be a reason why we do not prove the impossible\footnote{Recall that we wish to avoid containing \textit{any} 3-APs besides singletons. A more common non-degeneracy condition is to only worry about 3-APs with three distinct points, but this glosses over some issues that are relevant to the integer setting.} bound $r_3(\F_2^D)>1$. In the current paper, our reason comes from considering the fractional map $\{\cdot\}$, which is used to morally annihilate $\pi(\{0,1/2\}^D
)$ (thus our argument does not attempt to say anything about $3$-APs with common difference $\alpha \in \pi(\{0,1/2\}^D)$).

So similarly, if one were to achieve efficiency beyond $>1-c$, they would need a way to distinguish from $\pi(\{0,1/p,…,(p-1)/p\}^D)$ for any fixed prime $p$. This seems to suggest that one cannot “fix” a specific building block, but must instead do things in some parametrized fashion (or otherwise introduce significantly different ideas). Hence, we claim there is a natural barrier for proving $r_3(N) \gg N 2^{-\eta 2\sqrt{2\log_2(N)}}$ for some $\eta<\sqrt{\log_2(\frac{1}{1-c})}$.

\subsection{Longer progressions}

Next, one might wonder about $r_k(N):= r_k([N])$. Here the best-known lower bounds are of the form 
\[r_k(N) \gg \log^C(N) N\exp(-c_k \log^{p_k}(N))\]for certain explicit constants $c_k,p_k$ (cf. \cite{obryant} which refines the results of \cite{rankin} via the framework of \cite{greenwolf}). 

For experts in this area, we note that a routine adjustment to the methods \cite{obryant}, where we now use our building block $S_\ep$ in the final stage of the bootstrapping procedure, allows one to show that
\[r_k(N) \gg N\exp(-(1-\eta)c_k \log^{p_k}(N))\]for some absolute constant $\eta>0$ and all $k\ge 3$. However, without an improved building block, such ideas cannot replace `$1-\eta$' by (say) `$1/10$' (even when $k$ is very large).

We mention that in forthcoming work, the author will give a more conceptual argument to obtain these sorts of improvements for all sufficiently large $k$ \cite{hunter}. There we will construct `increasingly efficient' building blocks as $k$ gets sufficiently large, thus the bounds obtained in \cite{hunter} (for all large $k$) will outperform whatever can be obtained by just na\"ively applying the work from the present paper.


\begin{thebibliography}{}
\bibitem{behrend} F. A. Behrend, \textit{On sets of integers which contain no three in arithmetic progression,} in
\textit{Proceedings of the National Academy of Sciences} \textbf{32} (1946), p. 331-332.
\bibitem{elkin} M. Elkin, \textit{An improved construction of progression-free sets,} in \textit{SODA 10’} (2010), p. 886-905.
\bibitem{EG} J. S. Ellenberg and D. Gijswijt, \textit{On large subsets of $\F_q^n$ with no three-term arithmetic progression,} in \textit{Annals
of Mathematics} \textbf{185} (2017), 339–343.
\bibitem{EKL} C. Elsholtz, B. Klahn, and G. F. Lipnik, \textit{Large subsets of $\Z_n^m$ without arithmetic progressions,} in \textit{Des. Codes
Cryptogr.} \textbf{91} (2023), 1443–1452.
\bibitem{EPS} C. Elsholtz, L. Proske, and L. Sauermann, \textit{New lower bounds for three-term progression free sets in $\F_p^n$,} preprint (January 2024), 13 pp.  
\bibitem{green} B. J. Green, \textit{Lower bounds for corner-free sets,} in \textit{New Zealand Journal of Mathematics} \textbf{51} (2021).
\bibitem{greenwolf} B. J. Green and J. Wolf, \textit{A note on Elkin’s improvement of Behrend’s construction,} in \textit{Additive Number Theory}, p. 141–144, Springer, New York 2010.
\bibitem{HPSSS} L. Hambardzumyan, T. Pitassi, S. Sherif, M. Shirley, and A. Shraibman, \textit{An improved protocol for ExactlyN with more than 3 players,} preprint (September 2023), 33 pp.
\bibitem{hunter} Z. Hunter, \textit{Improved lower bounds for Szemer\'edi's theorem,} (in preparation).
\bibitem{obryant} K. O'Bryant, \textit{Sets of integers that do not contain long arithmetic progressions,} in \textit{Electronic Journal of Combinatorics} \textbf{18} (2011), 18 pp.

\bibitem{rankin} R. A. Rankin, \textit{Sets not containing more than a given number of terms in arithmetical progression,} in \textit{Proc. Roy. Soc. Edinburgh Sect. A} \textbf{65} (1960), p. 332-344.
\bibitem{salemspencer} R. Salem and D. Spencer, \textit{On sets of integers which contain no three in arithmetic progression,} in \textit{Proc. Nat. Acad. Sci. (USA)} \textbf{28} (1942), p. 561-563.
\end{thebibliography}
\end{document}